\title[Algebraic-geometric codes from Galois points]{Algebraic-geometric codes with many automorphisms arising from Galois points}
\author{Satoru Fukasawa}
\subjclass[2020]{14H37, 94B27}
\keywords{algebraic-geometric code, Galois point, automorphism group, finite field}
\address{Faculty of Science, Yamagata University, Kojirakawa-machi 1-4-12, Yamagata 990-8560, Japan} 
\email{s.fukasawa@sci.kj.yamagata-u.ac.jp}
\thanks{The author was partially supported by JSPS KAKENHI Grant Numbers JP19K03438 and JP22K03223.}
\newtheorem{theorem}{Theorem}
\newtheorem{example}{Example}
\theoremstyle{definition}
\newtheorem{remark}{Remark}
\begin{document}
\begin{abstract} 
A method of constructing algebraic-geometric codes with many automorphisms arising from Galois points for algebraic curves is presented.   
\end{abstract}

\maketitle 

\section{Introduction} 
A finite field of $q$ elements is denoted by $\mathbb{F}_q$. 
The set of all $\mathbb{F}_q$-rational points of an algebraic variety $V$ over $\mathbb{F}_q$ is denoted by $V(\mathbb{F}_q)$. 
V. D. Goppa discovered a construction of algebraic-geometric codes, according to the following data: a smooth projective curve $X$ defined over $\mathbb{F}_q$, a subset $S \subset X(\mathbb{F}_q)$,  and a divisor $D$ over $\mathbb{F}_q$ (see, for example, \cite{stichtenoth2, tsfasman-vladut}). 
This paper presents a method of constructing algebraic-geometric codes with many automorphisms, using the theory of {\it Galois points}.  

Let $\varphi: X \rightarrow \mathbb{P}^2$ be a morphism defined over $\mathbb{F}_q$, which is birational onto $\varphi(X)$. 
A point $P \in \mathbb{P}^2 \setminus \varphi(X)$ is called an outer Galois point, if the field extension $\overline{\mathbb{F}}_q(X)/(\pi_P\circ\varphi)^*\overline{\mathbb{F}}_q(\mathbb{P}^1)$ of function fields induced by the projection $\pi_P: \varphi(X) \rightarrow \mathbb{P}^1$ from $P$ is a Galois extension (see \cite{fukasawa1, miura-yoshihara, yoshihara}, see also \cite[Remark 1]{fukasawa3} for the definition of a Galois point over $\mathbb{F}_q$). 
The main result is the following. 

\begin{theorem} \label{code}
Let $G_1, G_2 \subset {\rm Aut}_{\mathbb{F}_q}(X)$ be different finite subgroups with $|G_1|=|G_2|$, and let $Q, Q' \in X(\overline{\mathbb{F}}_q)$. 
Assume that the following four conditions are satisfied: 
\begin{itemize}
\item[(a)] $X/G_i \cong \mathbb{P}^1$ for $i=1, 2$;  
\item[(b)] $G_1 \cap G_2=\{1\}$; 
\item[(c)] $\sum_{\sigma \in G_1}\sigma(Q)=\sum_{\tau \in G_2}\tau(Q)$ over $\overline{\mathbb{F}}_q$, and this divisor $D$ is defined over $\mathbb{F}_q$; 
\item[(d)] $Q' \in X(\mathbb{F}_q)$, $Q' \not\in {\rm supp}(D)$, and the cardinality $\#S$ of the orbit $S:=\langle G_1, G_2 \rangle \cdot Q'$ is at least $|G_1|+1$. 
\end{itemize} 
Then the following hold.  
\begin{itemize}
\item[(1)] The evaluation map 
$$ \Phi: \mathcal{L}(D) \rightarrow \bigoplus_{R \in S} \mathbb{F}_q \cdot R; \ f \mapsto (f(R))_{R \in S}$$
is injective, and the minimum distance of the code $\Phi(\mathcal{L}(D))$ is at least $\#S-|G_1|$. 
\item[(2)] There exists an injective homomorphism 
$$ \langle G_1, G_2 \rangle \hookrightarrow {\rm Aut}(\Phi(\mathcal{L}(D))).  $$ 
\end{itemize} 
\end{theorem}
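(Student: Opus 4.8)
The plan is to handle the two assertions in turn, basing everything on the identification of $D$ with a fibre of each quotient map. Throughout write $H:=\langle G_1,G_2\rangle$ and $E:=\sum_{R\in S}R$.

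For Part (1), the first thing I would record is that $\deg D=|G_1|$, since $D=\sum_{\sigma\in G_1}\sigma(Q)$ is a sum of $|G_1|$ points. Next I would argue that $S$ is disjoint from ${\rm supp}(D)$: the support of $D$ is $H$-invariant (because $H$ fixes $D$, as shown below), and $Q'\notin{\rm supp}(D)$, so the entire orbit $S=H\cdot Q'$ avoids ${\rm supp}(D)$; together with $S\subseteq X(\mathbb{F}_q)$ this makes every $f\in\mathcal{L}(D)$ regular at each $R\in S$ with value in $\mathbb{F}_q$. The kernel of $\Phi$ is then $\mathcal{L}(D-E)$, whose divisor has degree $|G_1|-\#S\le-1$, forcing $\mathcal{L}(D-E)=0$ and hence injectivity. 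For the minimum distance I would observe that a nonzero $f\in\mathcal{L}(D)$ vanishing on a subset $T\subseteq S$ lies in $\mathcal{L}(D-\sum_{R\in T}R)$, which is nonzero only if $|G_1|-\#T\ge0$; thus $f$ has at most $|G_1|$ zeros on $S$, so every nonzero codeword has weight at least $\#S-|G_1|$.

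For Part (2), the first step is to verify that $H$ fixes $D$. For $g\in G_1$ this follows by reindexing $D=\sum_{\sigma\in G_1}\sigma(Q)$, and for $g\in G_2$ it follows from the second representation $D=\sum_{\tau\in G_2}\tau(Q)$ in condition (c); hence $H$ fixes $D$. Consequently $H$ acts $\mathbb{F}_q$-linearly on $\mathcal{L}(D)$ by $g\cdot f:=f\circ g^{-1}$ (this lies in $\mathcal{L}(D)$ precisely because $g^{-1}(D)=D$) and on $\mathbb{F}_q^{S}$ by the coordinate permutation $\rho_g$ sending $(a_R)_{R}$ to $(a_{g^{-1}(R)})_{R}$. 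A short computation gives the equivariance $\Phi(g\cdot f)=\rho_g(\Phi(f))$, so each $\rho_g$ maps the code $C:=\Phi(\mathcal{L}(D))$ bijectively onto itself; being a coordinate permutation, $\rho_g$ is a code automorphism, and $g\mapsto\rho_g$ is a homomorphism $H\to{\rm Aut}(C)$.

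The crux of the whole argument, and the step I expect to be the main obstacle, is the injectivity of this homomorphism. Here I would use condition (a): choosing a coordinate on $X/G_i\cong\mathbb{P}^1$ so that the image of $Q$ is $\infty$, the pullback $x_i$ of this coordinate is a $G_i$-invariant function whose pole divisor is exactly the fibre $\pi_i^{*}(\pi_i(Q))$ over $\infty$, namely $D$; hence $x_1,x_2\in\mathcal{L}(D)$. If $\rho_g$ fixes $C$ pointwise, then by the equivariance and the injectivity of $\Phi$ the element $g$ fixes every function in $\mathcal{L}(D)$, in particular $x_1$ and $x_2$. Since $g\cdot x_i=x_i$ means $x_i\circ g=x_i$, the automorphism $g$ preserves the fibres of the Galois cover $X\to X/G_i$ and therefore belongs to its deck group $G_i$; by condition (b), $g\in G_1\cap G_2=\{1\}$. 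The remaining delicate point is the clean identification of $D$ as the ramified fibre $\pi_i^{*}(\pi_i(Q))$ of each quotient map, which underlies both $\deg D=|G_1|$ and the membership $x_i\in\mathcal{L}(D)$; once that is established, the deck-group characterization closes the argument and the estimates of Part (1) are routine degree computations.
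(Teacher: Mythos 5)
Your proposal is correct, and while your part (1) coincides with the paper's (counting zeros of $f$ against $\deg D=|G_1|$; your kernel formulation via $\mathcal{L}(D-E)$ is the same computation), your proof of the injectivity in part (2) takes a genuinely different route. The paper invokes the criterion of \cite{fukasawa2} to realize the two quotient maps as projections of a plane model $\varphi:X\rightarrow\mathbb{P}^2$ from two outer Galois points $P_1,P_2$, lifts a kernel element $\gamma$ (one with $\gamma|_S=1$) to a linear transformation $\tilde{\gamma}$ of $\mathbb{P}^2$ fixing the non-collinear points $P_1,P_2,\varphi(Q')$, and then uses condition (d) a second time, in the form $\varphi(S)\not\subset\ell_i$, to manufacture a third fixed point on each of the lines $\ell_1,\ell_2$ and force $\tilde{\gamma}=1$. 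You instead stay entirely inside the function field: the $G_i$-invariant coordinates $x_i$ with pole divisor $\pi_i^{*}(\pi_i(Q))=D$ lie in $\mathcal{L}(D)$, a kernel element fixes them by equivariance plus injectivity of $\Phi$, hence lies in the deck group $G_i$ for both $i$, and condition (b) finishes. Your argument is self-contained (no plane model, no external criterion), localizes the use of (d) to the injectivity of $\Phi$, and in fact proves something slightly stronger than the paper does, namely that any $\gamma$ acting trivially on the code (not merely trivially on $S$) is the identity; the paper's argument, conversely, works directly from $\gamma|_S=1$ without passing through the code at all, and has the expository virtue of exhibiting the Galois-point geometry (cf.\ Remark 1) that motivates the theorem. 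Two small points you should make explicit: first, $x_i$ is in general only defined over $\overline{\mathbb{F}}_q$ (since $Q$ and hence $\pi_i(Q)$ need not be $\mathbb{F}_q$-rational), so you need the standard base-change identity $\mathcal{L}_{\overline{\mathbb{F}}_q}(D)=\mathcal{L}_{\mathbb{F}_q}(D)\otimes_{\mathbb{F}_q}\overline{\mathbb{F}}_q$ to pass from $\gamma$ fixing the $\mathbb{F}_q$-space pointwise to $\gamma$ fixing $x_1,x_2$; second, the identification $\sum_{\sigma\in G_i}\sigma(Q)=\pi_i^{*}(\pi_i(Q))$, which you rightly flag as the delicate point, holds because for a Galois cover over an algebraically closed field the stabilizer order of a point equals its ramification index (this is valid in wild characteristic as well, since the residue extension is trivial), so the orbit sum and the scheme-theoretic fibre agree with multiplicities. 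With these two sentences added, your proof is complete.
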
 

\begin{remark}
It follows from a criterion described by the present author in \cite{fukasawa2} that conditions (a), (b) and (c) are satisfied if and only if there exists a morphism $\varphi: X \rightarrow \mathbb{P}^2$ of degree $|G_1|$ defined over $\mathbb{F}_q$, which is birational onto its image, and different outer Galois points $P_1, P_2 \in (\mathbb{P}^2\setminus \varphi(X))(\mathbb{F}_q)$ exist for $\varphi(X)$ such that $G_{P_1}=G_1$, $G_{P_2}=G_2$, and points $P_1, P_2, Q$ are collinear, where $G_{P_i} \subset {\rm Aut}_{\mathbb{F}_q}(X)$ is the Galois group at $P_i$ for $i=1, 2$. 
\end{remark}

\section{Proof} 

\begin{proof}[Proof of Theorem \ref{code}] 
Let $f \in \mathcal{L}(D) \setminus \{0\}$. 
Since $\deg D=|G_1|$, it follows that the number of zeros of $f$ is at most $|G_1|$. 
By condition (d), $\#S \ge |G_1|+1$.  Therefore, $f(R) \ne 0$ for some $R \in S$. 
This argument also implies that the minimum distance is at least $\#S-|G_1|$.  
Assertion (1) follows. 

We prove assertion (2). 
By condition (c), it follows that $\gamma^*D=D$, namely, $\mathcal{L}(\gamma^*D)=\mathcal{L}(D)$, for any $\gamma \in \langle G_1, G_2 \rangle$. 
Furthermore, $\gamma(S)=S$ for any $\gamma \in \langle G_1, G_2 \rangle$, since $S$ is an orbit. 
Therefore, we have a natural homomorphism 
$$ \langle G_1, G_2 \rangle \rightarrow {\rm Aut}(\Phi(\mathcal{L}(D)) $$
(see also \cite{stichtenoth1}, \cite[8.2.3]{stichtenoth2}).   
We prove that there does not exist $\gamma \in \langle G_1, G_2 \rangle \setminus \{1\}$ such that $\gamma|_S=1$.  
Assume that $\gamma \in \langle G_1, G_2 \rangle$ and $\gamma|_S=1$. 
Let $D_i:=\sum_{\sigma \in G_i}\sigma(Q')$ for $i=1, 2$. 
By conditions (a), (b) and (c), it follows from the criterion by the present author in \cite{fukasawa2} that the coverings $X \rightarrow X/G_1$, $X \rightarrow X/G_2$ are realized as projections from different outer Galois points $P_1, P_2$ with $G_{P_1}=G_1$, $G_{P_2}=G_2$. 
The birational embedding $X \rightarrow \mathbb{P}^2$ is denoted by $\varphi$.   
Then $D_1, D_2 \in \Lambda$ for $i=1, 2$, where $\Lambda$ is the sublinear system of $|D|$ corresponding to $\varphi$. 
Note that $D_1 \ne D_2$, $D_1 \ne D$, and $D_2 \ne D$.  
Since $\gamma|_S=1$, it follows that $\gamma^*D_1=D_1$ and $\gamma^*D_2=D_2$. 
Since $\gamma^*D=D$, it follows that $\gamma^*\Lambda=\Lambda$, namely, there exists a linear transformation $\tilde{\gamma}$ of $\mathbb{P}^2$ such that $\tilde{\gamma} \circ \varphi=\varphi \circ \gamma$. 
Let $\ell, \ell_1$ and $\ell_2 \subset \mathbb{P}^2$ be the lines corresponding to $D$, $D_1$ and $D_2$, respectively. 
Then the linear transformation $\tilde{\gamma}$ fixes non-collinear points $P_1, P_2$ and $\varphi(Q') \in \mathbb{P}^2$, which are given by $\ell \cap \ell_1$, $\ell \cap \ell_2$ and $\ell_1 \cap \ell_2$, respectively.  
If there exists a point $R \in \varphi(S)$ with $R \not\in \ell_1 \cup \ell_2$, then $\tilde{\gamma}=1$, namely, $\gamma=1$. 
Therefore, we can assume that $\varphi(S) \subset \ell_1 \cup \ell_2$. 
By condition (d), $\varphi(S) \not\subset \ell_1$. 
Therefore, there exists a point $R_2 \in \varphi(S) \cap \ell_2$ with $R_2 \not\in \ell_1$. 
In particular, $R_2 \ne \varphi(Q')$. 
Similarly, there exists a point $R_1 \in \varphi(S) \cap \ell_1$ with $R_1 \ne \varphi(Q')$. 
Since $\tilde{\gamma}$ fixes three points $P_1, \varphi(Q'), R_1$ on the line $\ell_1$, it follows that $\tilde{\gamma}|_{\ell_1}=1$. 
Similarly, $\tilde{\gamma}|_{\ell_2}=1$. 
Therefore, $\tilde{\gamma}=1$, namely, $\gamma=1$. 
\end{proof} 

\begin{remark} \label{many Galois} 
Theorem \ref{code} can be generalized to the case where collinear outer Galois points $P_1, P_2, \ldots, P_k$ exist, namely, there exists an $\mathbb{F}_q$-line $\ell \subset \mathbb{P}^2$ such that $P_1, P_2, \ldots, P_k \in \ell$. 
Assume the following:  
\begin{itemize}
\item[(d')] $Q' \in X(\mathbb{F}_q)$, $Q' \not\in {\rm supp}(\varphi^*\ell)$, and there exist $i, j$ with $i \ne j$ such that $\#(\langle G_{P_i}, G_{P_j}\rangle \cdot Q') \ge |G_{P_i}|+1$. 
\end{itemize} 
Let $S:=\langle G_{P_1}, G_{P_2}, \ldots, G_{P_k} \rangle \cdot Q'$. 
Then there exists an injective homomorphism 
$$ \langle G_{P_1}, G_{P_2}, \ldots, G_{P_k} \rangle \hookrightarrow {\rm Aut}(\Phi(\mathcal{L}(\varphi^*\ell))).  $$ 
\end{remark} 

\section{Examples} 

\begin{example} \label{Hermitian} 
Let $X \subset \mathbb{P}^2$ be the Fermat curve 
$$ X^{q+1}+Y^{q+1}+Z^{q+1}=0.  $$
It is confirmed that $P_1=(1:0:0)$ and $P_2=(0:1:0)$ are outer Galois points with $G_{P_i} \cong \mathbb{Z}/(q+1)\mathbb{Z}$ for $i=1, 2$ (see also \cite{homma}). 
Conditions (a) and (b) in Theorem \ref{code} are satisfied for $G_{P_1}, G_{P_2}$. 
Let $\zeta$ be a primitive $q+1$-th root of unity. 
Then 
$$ \langle G_{P_1}, G_{P_2} \rangle
=\{(X:Y:Z) \mapsto (\zeta^i X: \zeta ^j Y: Z) \ | \ 0 \le i, j \le q \}. $$ 
Let $Q \in X(\mathbb{F}_{q^2}) \cap \{Z=0\}$. 
Then condition (c) in Theorem \ref{code} is satisfied, and $D$ is the divisor defined by $X \cap \{Z=0\}$.  
We take a point $Q'=(a:b:1) \in X(\mathbb{F}_{q^2})$ with $a b \ne 0$. 
Then 
$$ S= \{ (\zeta^i a: \zeta ^j b: 1) \ | \ 0 \le i, j \le q \}.$$ 
Therefore, condition (d) in Theorem \ref{code} is satisfied for the $4$-tuple $(G_{P_1}, G_{P_2}, Q, Q')$. 
It is well known that $\dim \mathcal{L}(D)=3$ (see, for example, \cite[Lemma 11.28]{hkt}). 
Note that $q+1$ points $(\zeta^ia: b:1) \in S$ with $0 \le i \le q$ are zeros of the function $y-b \in \mathcal{L}(D)$. 
Then the code $\Phi(\mathcal{L}(D))$ has parameters 
$$[(q+1)^2, \ 3, \ (q+1)q]_{q^2}$$ 
and admits an automorphism group 
$$(\mathbb{Z}/(q+1)\mathbb{Z})^{\oplus 2}.$$ 
\end{example}

\begin{example} 
Let $X=\mathbb{P}^1$. 
Assume that $q$ is odd and $q \ge 5$. 
Let $\zeta$ be a primitive $m:=(q-1)/2$-th root of unity, and let 
\begin{eqnarray*} 
G_1&:=&\{(s:t) \mapsto (\zeta^i s:t) \ | \ 0 \le i \le m-1\},  \\
G_2&:=&\{(s:t) \mapsto (\zeta^i s+(1-\zeta^i)t:t) \ | \ 0 \le i \le m-1\}. 
\end{eqnarray*} 
By L\"{u}roth's theorem, condition (a) in Theorem \ref{code} is satisfied. 
Obviously, $G_1 \cap G_2=\{1\}$, namely, condition (b) is satisfied. 
We take $Q=(1:0)$.  
Then condition (c) in Theorem \ref{code} is satisfied, and $D=m Q$. 
Let $Q'=(0:1)$. 
It can be confirmed that $$ \langle G_1, G_2 \rangle \cong \mathbb{F}_q \rtimes (\mathbb{Z}/m\mathbb{Z}), $$
and that $\langle G_1, G_2 \rangle \cdot Q'=\mathbb{P}^1(\mathbb{F}_q) \setminus \{Q\}$. 
Therefore, condition (d) in Theorem \ref{code} is satisfied for the $4$-tuple $(G_1, G_2, Q, Q')$.  
It is well known that $\dim \mathcal{L}(D)=m+1$ (see, for example, \cite{stichtenoth2}). 
Then the (Reed--Solomon) code $\Phi(\mathcal{L}(D))$ has parameters
$$ \left[q, \ \frac{q+1}{2}, \ \frac{q+1}{2} \right]_q $$
and admits an automorphism group of order $q(q-1)/2$. 
\end{example}

\begin{example}
Let $\varphi(X) \subset \mathbb{P}^2$ be defined by 
$$ (x^{q^2}+x)^{q+1}+(y^{q^2}+y)^{q+1}+1=0, $$
which was studied in \cite{borges-fukasawa}. 
According to \cite{borges-fukasawa}, the full automorphism group $G:={\rm Aut}(X)$ is generated by the Galois groups of all outer Galois points on the line $\{Z=0\}$, and $|G|=q^5(q^2-1)(q+1)$. 
The divisor coming from $\varphi(X) \cap \{Z=0\}$ is denoted by $D$. 
Let $a^{q+1}=-1$, let $x_0^{q^2}+x_0=a$ and let $Q' \in X(\mathbb{F}_{q^4})$ with $\varphi(Q')=(x_0:0:1)$. 
It is easily verified that $\#(\langle G_{P_1}, G_{P_2} \rangle \cdot Q')>\deg \varphi(X)=|G_{P_1}|$ for Galois points $P_1=(1:0:0)$ and $P_2=(0:1:0)$. 
This implies that condition (d') in Remark \ref{many Galois} is satisfied. 
It can be confirmed that $\#(G \cdot Q')=q^5(q^2-1)$. 
It was proved that $\dim \mathcal{L}(D)=3$ in \cite[Proposition 1]{borges-fukasawa}. 
Note that $q^3+q^2$ points $(x:0:1) \in G \cdot Q'$ with $(x^{q^2}+x)^{q+1}+1=0$ are zeros of the function $y \in \mathcal{L}(D)$. 
Then the code $\Phi(\mathcal{L}(D))$ has parameters 
$$ [q^7-q^5, \ 3, \ q^7-q^5-q^3-q^2]_{q^4} $$
and admits an automorphism group of order $q^5(q^2-1)(q+1)$. 
\end{example}

\section{A generalization} 
T. Hasegawa \cite{hasegawa} suggested a generalization of our code in Example \ref{Hermitian}, according to replacing the divisor $D$ by $m D$ with $m <\#S/|G_1|$. 
Based on this, Theorem \ref{code} was generalized by the author as follows. 

\begin{theorem}
Let $(G_1, G_2, Q, Q')$ be a $4$-tuple satisfying conditions (a), (b) and (c) in Theorem \ref{code}. 
We take a positive integer $e$ so that $D=e D_0$, where $D_0=\sum_{R \in {\rm supp}(D)} R$. 
Let $m$ be a positive integer. 
Assume the following: 
\begin{itemize}
\item[(d'')] $Q' \in X(\mathbb{F}_q)$, $Q' \not\in {\rm supp}(D)$, and  $m < e \times \#S/|G_1|$, where $S:=\langle G_1, G_2 \rangle \cdot Q'$. 
\end{itemize} 
Then the following hold.  
\begin{itemize}
\item[(1)] The evaluation map 
$$ \Phi: \mathcal{L}(m D_0) \rightarrow \bigoplus_{R \in S} \mathbb{F}_q \cdot R; \ f \mapsto (f(R))_{R \in S}$$
is injective, and the minimum distance of the code $\Phi(\mathcal{L}(m D_0))$ is at least $\#S-m \times |G_1|/e$. 
\item[(2)] There exists an injective homomorphism 
$$ \langle G_1, G_2 \rangle \hookrightarrow {\rm Aut}(\Phi(\mathcal{L}(m D_0))).  $$ 
\end{itemize} 
\end{theorem}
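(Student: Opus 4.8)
The plan is to follow the proof of Theorem \ref{code} line by line, replacing the divisor $D$ by $mD_0$ throughout and keeping track of the degree $\deg(mD_0) = m\deg D_0 = m|G_1|/e$, which plays the role that $|G_1| = \deg D$ played there.

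For assertion (1) I would repeat the zero-counting argument verbatim. Take $f\in\mathcal{L}(mD_0)\setminus\{0\}$; its number of zeros is at most $\deg(mD_0) = m|G_1|/e$. Condition (d'') is exactly the inequality $m|G_1|/e < \#S$, so $f$ cannot vanish at every point of $S$; this gives injectivity of $\Phi$, and since $f$ vanishes at no more than $m|G_1|/e$ points of $S$, the weight of $\Phi(f)$ is at least $\#S - m|G_1|/e$, which is the claimed minimum-distance bound.

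For assertion (2) I would first produce the homomorphism $\langle G_1, G_2\rangle\to{\rm Aut}(\Phi(\mathcal{L}(mD_0)))$. The one new point compared with Theorem \ref{code} is that I must check that each $\gamma$ preserves $\mathcal{L}(mD_0)$, not merely $\mathcal{L}(D)$. This follows because condition (c) gives $\gamma^* D = D$ for all $\gamma\in\langle G_1, G_2\rangle$, and since $D = eD_0$ with all coefficients of $D$ equal to $e$, the automorphism $\gamma$ permutes ${\rm supp}(D) = {\rm supp}(D_0)$; hence $\gamma^* D_0 = D_0$ and therefore $\gamma^*(mD_0) = mD_0$, so $\mathcal{L}(\gamma^*(mD_0)) = \mathcal{L}(mD_0)$. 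Combined with $\gamma(S) = S$ (as $S$ is an orbit), this yields the homomorphism, whose kernel consists precisely of those $\gamma$ with $\gamma|_S = 1$. Thus injectivity reduces to the implication $\gamma|_S = 1\Rightarrow\gamma = 1$, and here I would invoke the geometric argument of Theorem \ref{code} without change: realize $X\to X/G_i$ as projections from outer Galois points $P_1, P_2$, lift $\gamma$ to a linear transformation $\tilde\gamma$ of $\mathbb{P}^2$ fixing the three non-collinear points $P_1, P_2, \varphi(Q')$, and then use points of $\varphi(S)$ lying on the lines $\ell_1, \ell_2$ to force $\tilde\gamma = 1$.

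The main obstacle is the step where Theorem \ref{code} invoked condition (d) to guarantee $\varphi(S)\not\subset\ell_1$ (and $\not\subset\ell_2$): condition (d'') is weaker, since $m|G_1|/e$ may be smaller than $|G_1|$ when $m < e$. When $m\ge e$ (in particular whenever $e = 1$), condition (d'') already gives $\#S > |G_1|\ge\#(\varphi(S)\cap\ell_i)$, so $\varphi(S)$ cannot be contained in a single line and the original argument transcribes directly. When $m < e$ I would establish the non-containment by hand: $\varphi(S)\subset\ell_1$ would force $S = G_1\cdot Q'$ and hence, intersecting with ${\rm supp}(\varphi^*\ell_2)$, that $Q'$ is fixed by $G_2$, while symmetrically $\varphi(S)\subset\ell_2$ would force $Q'$ to be fixed by $G_1$; ruling out these degenerate positions of $Q'$ is the delicate point, and I expect it to absorb most of the care, the remainder being a faithful copy of the proof of Theorem \ref{code}.
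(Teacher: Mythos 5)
A preliminary remark: the paper states this generalized theorem \emph{without proof} (it appears in Section 4 as a generalization prompted by Hasegawa, with the proof evidently meant to be a variant of that of Theorem \ref{code}), so there is no written proof to compare against and your proposal must be judged on its own merits. Your assertion (1) is correct, and your verification of the one new point needed for the homomorphism in (2) --- that $\gamma^*D=D$ together with $D=eD_0$ forces $\gamma^*D_0=D_0$, hence $\gamma^*(mD_0)=mD_0$ --- is right. For $m\ge e$ your transcription of the injectivity argument of Theorem \ref{code} is complete, since then $\#S>m|G_1|/e\ge|G_1|\ge\#(\varphi(S)\cap\ell_i)$.

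However, for $m<e$ your proof has a genuine, and self-acknowledged, gap: you correctly identify that condition (d$''$) no longer yields $\#S\ge |G_1|+1$, so the step ``$\varphi(S)\not\subset\ell_1$'' loses its justification, but you then defer exactly this case (``ruling out these degenerate positions of $Q'$ is the delicate point'') without supplying the argument. This is not a removable technicality. If $\varphi(S)\subseteq\ell_1$, then $S=G_1\cdot Q'$, and for each $R\in S$ the orbit $G_2\cdot R$ stays in $S$, so $\varphi(G_2\cdot R)\subseteq \ell_1\cap\overline{P_2\varphi(R)}$, which is a \emph{single} point because $P_2\notin\ell_1$ (otherwise $\ell_1=\overline{P_1P_2}=\ell$, contradicting $D_1\ne D$); granting injectivity of $\varphi$ over these points --- an assumption you also use silently, and which itself needs care since $\varphi$ is only birational onto its image and $\varphi(Q')$ could be a singular point of $\varphi(X)$ --- every $\tau\in G_2$ then fixes $S$ pointwise. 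In that configuration the natural homomorphism $\langle G_1,G_2\rangle\rightarrow{\rm Aut}(\Phi(\mathcal{L}(mD_0)))$ kills all of $G_2$, so assertion (2) would fail outright; excluding this configuration is therefore the entire mathematical content of the theorem in the range $m<e$, not a residual check. Moreover it cannot be excluded by soft counting: it merely asserts that the points of $S$ are totally ramified for $\pi_{P_2}$ (each fiber divisor being $|G_2|\cdot R$), which Riemann--Hurwitz does not forbid without genus information, so some genuinely new input from the two-Galois-point geometry is required, and your sketch does not provide it.
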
 

\

\begin{center} {\bf Acknowledgements} \end{center} 
The author is grateful to Professor Masaaki Homma, Professor Takehiro Hasegawa, and Doctor Kazuki Higashine for helpful comments.

\end{document}